\documentclass[11pt]{article}

\usepackage[T1]{fontenc}
\usepackage[utf8]{inputenc}
\usepackage{lmodern}
\usepackage{amsmath,amssymb,amsthm,mathtools}
\usepackage[a4paper,margin=30mm]{geometry}
\usepackage{microtype}
\usepackage[hidelinks]{hyperref}

\newtheorem{theorem}{Theorem}[section]
\newtheorem{lemma}[theorem]{Lemma}
\newtheorem{corollary}[theorem]{Corollary}

\newcommand{\R}{\mathbb{R}}
\newcommand{\ftensor}{\mathbin{\bar{\otimes}}}
\newcommand{\ptensor}{\mathbin{\widehat{\otimes}_{|\pi|}}}
\newcommand{\one}{\mathbf{1}}

\title{Tensor Products of Almost \(f\)-Algebras: A Counterexample}
\author{Mohamed Amine Ben Amor\thanks{%
Research Laboratory of Algebra, Topology, Arithmetic, and Order,
Department of Mathematics, Faculty of Mathematical, Physical and Natural Sciences of Tunis,
Tunis El Manar University, Tunisia.}}
\date{}

\begin{document}

\maketitle

\begin{abstract}
We show that the natural tensor product multiplication of two Archimedean
almost \(f\)-algebras does not, in general, extend to their Fremlin tensor
product. The two algebras in the counterexample have the same underlying
vector lattice \(C[0,1]\). Their multiplications are explicit, positive and
nilpotent of index three. The obstruction is the fact that the function
\((s,t)\mapsto e^{st}\) does not belong to
\(C[0,1]\ftensor C[0,1]\). The same construction gives Banach almost
\(f\)-algebras and has consequences for previously published assertions on
Riesz and Fremlin projective tensor products of almost \(f\)-algebras.
\end{abstract}

\noindent
\textbf{2020 Mathematics Subject Classification.}
46A40, 46M05, 06F25.

\medskip
\noindent
\textbf{Keywords.}
almost \(f\)-algebra; Fremlin tensor product; Riesz tensor product;
positive operator.

\section{Introduction}

For Archimedean \(f\)-algebras, the Fremlin tensor product admits a natural
\(f\)-algebra multiplication extending
\[
 (a\otimes b)(c\otimes d)=ac\otimes bd;
\]
see \cite{AzouziBenAmorJaber,BuskesWickstead}. It is natural to ask whether
the same statement remains true for almost \(f\)-algebras.

Theorem~3.12 of \cite{Gok} gives an affirmative answer. Its proof uses the
claim that every symmetric bilinear map is orthosymmetric. This is false,
even for positive symmetric bilinear maps. For example, on \(\R^2\) with the coordinate order, let
\[
 e_1=(1,0),\qquad e_2=(0,1),
\]
and define
\[
 \Phi(x,y)=(x_1+x_2)(y_1+y_2),
 \qquad x=(x_1,x_2),\ y=(y_1,y_2).
\]
Then \(\Phi:\R^2\times\R^2\to\R\) is positive and symmetric. However,
\[
 e_1\perp e_2,
 \qquad
 \Phi(e_1,e_2)=1.
\]
Thus symmetry, even together with positivity, does not imply
orthosymmetry.

We prove that the statement itself is false. The example uses only the
vector lattice \(C[0,1]\). The main ingredient is a bounded positive operator
\(S_0\) for which a simple element of the Fremlin tensor product is sent,
after acting in one variable, to the function \(e^{st}\), which does not
belong to that tensor product. We then build two almost \(f\)-algebra
multiplications on \(C[0,1]\) which force this operator to appear as
multiplication by the positive tensor \(\one\otimes\one\).

\section{The obstruction on \(C[0,1]\ftensor C[0,1]\)}

Put
\[
 I=[0,1],\qquad X=C(I),\qquad H=X\ftensor X.
\]
We identify \(H\) with the Riesz subspace of \(C(I^2)\) generated by the
algebraic tensor product \(X\otimes X\); see \cite{Fremlin1972} and
\cite[Proposition~3.1]{BuskesWickstead}.

We use the following standard form of Fremlin's positive universal property:
if \(G\) is uniformly complete, then every positive bilinear map
\(X\times X\to G\) has a unique positive linear extension from \(H\) to
\(G\); see \cite[Theorem~5.3]{Fremlin1972}.

We shall also use the following elementary fact about operators acting in one
variable.

\begin{lemma}
Let \(L,M:X\to X\) be norm-bounded linear operators. For \(q\in C(I^2)\), define
\[
 (L_xq)(s,t)=L(q(\cdot,t))(s),
 \qquad
 (M_yq)(s,t)=M(q(s,\cdot))(t).
 \]
Then \(L_x\) and \(M_y\) are bounded linear operators on \(C(I^2)\), with
\(\|L_x\|\leq\|L\|\) and \(\|M_y\|\leq\|M\|\). If \(L\) and
\(M\) are positive, then \(L_x\) and \(M_y\) are positive and hence order
bounded. Moreover,
\[
 L_xM_y=M_yL_x.
\]
\end{lemma}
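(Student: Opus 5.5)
The plan has four steps: well-definedness, the norm bound, positivity, and commutation. Only the first needs a real argument, namely continuity of $L_xq$ in both variables jointly.

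\emph{Well-definedness.} Fix $q\in C(I^2)$. Since $I^2$ is compact, $q$ is uniformly continuous. Hence the map $t\mapsto q(\cdot,t)$ from $I$ into $X$ is norm continuous, because $\|q(\cdot,t)-q(\cdot,t')\|_\infty\to0$ as $t'\to t$. For $(s,t),(s',t')\in I^2$ we estimate
\[
|(L_xq)(s,t)-(L_xq)(s',t')|\le \|L\|\,\|q(\cdot,t)-q(\cdot,t')\|_\infty+|L(q(\cdot,t'))(s)-L(q(\cdot,t'))(s')|.
\]
The first term is small once $t'$ is close to $t$. For the second term we use that the family $\{L(q(\cdot,t')):t'\in I\}$ is the image of a compact subset of $X$ under the continuous map $L$. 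It is therefore compact in $X$, and so equicontinuous by Arzel\`a--Ascoli. The second term is then small uniformly in $t'$ once $s'$ is close to $s$. So $L_xq\in C(I^2)$. Linearity is clear, and the same argument applies to $M_y$.

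\emph{Norm bound and positivity.} For the norm, $|(L_xq)(s,t)|\le\|L\|\,\|q(\cdot,t)\|_\infty\le\|L\|\,\|q\|_\infty$, which gives $\|L_x\|\le\|L\|$. For positivity, if $q\ge0$ then every section $q(\cdot,t)$ is positive, so $L(q(\cdot,t))\ge0$ and $L_xq\ge0$. A positive operator is order bounded, since it maps $[-u,u]$ into $[-L_xu,L_xu]$. The same reasoning gives both facts for $M_y$.

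\emph{Commutation.} This is the step needing the most care, and I would handle it by density. First check the identity on elementary tensors $q=f\otimes g$, that is, $q(s,t)=f(s)g(t)$. Here $L_x(f\otimes g)=Lf\otimes g$ and $M_y(f\otimes g)=f\otimes Mg$, so
\[
L_xM_y(f\otimes g)=Lf\otimes Mg=M_yL_x(f\otimes g).
\]
By linearity the identity holds on the algebraic tensor product $X\otimes X$. This is a subalgebra of $C(I^2)$ that contains constants and separates points, so it is uniformly dense by Stone--Weierstrass. Finally, $L_xM_y$ and $M_yL_x$ are both bounded operators on $C(I^2)$ by the norm estimate above. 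Two bounded operators that agree on a dense subspace agree everywhere, so $L_xM_y=M_yL_x$.

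The main obstacle is thus the joint continuity in the first step. Once it is handled by the compactness and equicontinuity argument, everything else is routine.
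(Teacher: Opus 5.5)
Your proof is correct and follows the paper's route. Joint continuity comes from norm continuity of \(t\mapsto L(q(\cdot,t))\), the norm bound and positivity are read off section by section, and commutation is checked on \(f\otimes g\) and then extended using density of \(X\otimes X\) and boundedness. The one thing you can simplify is the continuity estimate: split it as \(|F(t)(s)-F(t)(s')|+\|F(t)-F(t')\|_\infty\) with \(F(t)=L(q(\cdot,t))\), so that only continuity of the single function \(F(t)\) is needed. Your Arzel\`a--Ascoli equicontinuity step is valid, but with this split it becomes unnecessary.
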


\begin{proof}
For \(q\in C(I^2)\), the map \(t\mapsto q(\cdot,t)\) is continuous from
\(I\) into \(X\) for the supremum norm. Hence
\(t\mapsto L(q(\cdot,t))\) is continuous from \(I\) into \(X\). It follows
that \((s,t)\mapsto L(q(\cdot,t))(s)\) is continuous. Also
\[
 \|L_xq\|_\infty\leq\|L\|\,\|q\|_\infty.
\]
The same argument applies to \(M_y\). If \(L\ge0\), then
\(q\ge0\) implies \(L_xq\ge0\), so \(L_x\) is positive. In particular, if
\(|q|\le u\) with \(u\in C(I^2)_+\), then
\[
 |L_xq|\le L_x|q|\le L_xu,
\]
which proves order boundedness. The same argument applies to \(M_y\). Thus
order boundedness and norm boundedness are both available here, but they are
separate properties.

For an elementary tensor \(q=f\otimes g\),
\[
 L_xM_y(f\otimes g)=Lf\otimes Mg=M_yL_x(f\otimes g).
\]
The algebraic tensor product \(X\otimes X\) is uniformly dense in
\(C(I^2)\), and the two operators are bounded. Hence the equality holds on
all of \(C(I^2)\).
\end{proof}

\begin{lemma}
The function
\[
 k:I^2\longrightarrow\R,\qquad k(s,t)=e^{st},
\]
does not belong to \(H=X\ftensor X\).
\end{lemma}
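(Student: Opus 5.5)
The plan is to use a structural description of the Riesz subspace generated by a vector subspace of \(C(I^2)\). Every element of \(H\) agrees, piece by piece, with finitely many elements of \(X\otimes X\). Then Baire's theorem forces \(k\) to coincide with a finite-rank function on some open rectangle, and the linear independence of exponentials rules this out.

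\emph{Step 1.} Let \(\mathcal{P}\) be the set of all \(h\in C(I^2)\) for which there is a finite set \(F\subset X\otimes X\) such that, for every point \(p\in I^2\), \(h(p)\in\{v(p): v\in F\}\). I will show that \(\mathcal{P}\) is a Riesz subspace of \(C(I^2)\) containing \(X\otimes X\).
\begin{itemize}
\item Containment is trivial: take \(F=\{h\}\).
\item If \(h\) is witnessed by \(F\) and \(h'\) by \(F'\), then \(h+h'\) is witnessed by \(F+F'\), which is finite and contained in \(X\otimes X\).
\item The scalar multiple \(ch\) is witnessed by \(cF\).
\item Since \(\max(h,h')(p)\) equals either \(h(p)\) or \(h'(p)\), the function \(h\vee h'\) is witnessed by \(F\cup F'\).
\end{itemize}
Since \(H\) is, by the identification recalled above, the Riesz subspace of \(C(I^2)\) generated by \(X\otimes X\), it follows that \(H\subseteq\mathcal{P}\).

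\emph{Step 2.} Suppose, for a contradiction, that \(k\in H\), and take a witnessing set \(F=\{v_1,\dots,v_n\}\). The sets \(A_j=\{p\in I^2: k(p)=v_j(p)\}\) are closed, because all the functions involved are continuous, and they cover \(I^2\). By the Baire category theorem, some \(A_j\) has nonempty interior. Hence there are nondegenerate open intervals \(U,V\subseteq I\) such that
\[
 e^{st}=\sum_{i=1}^m f_i(s)g_i(t),\qquad s\in U,\ t\in V,
\]
for suitable \(f_i,g_i\in X\).

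\emph{Step 3.} From this identity, every function \(t\mapsto e^{st}\) on \(V\), with \(s\in U\), lies in the span of \(g_1|_V,\dots,g_m|_V\), which has dimension at most \(m\). Now choose \(m+1\) distinct points \(s_0,\dots,s_m\in U\). The exponentials \(e^{s_0t},\dots,e^{s_mt}\) are linearly independent on any nondegenerate interval. To see this, suppose a vanishing linear combination exists. It is a real-analytic function vanishing on \(V\), hence it vanishes on all of \(\R\). Letting \(t\to\infty\) then isolates the coefficient of the largest exponent, forcing it to be zero, and induction finishes the argument. This gives \(m+1\) linearly independent functions in a space of dimension at most \(m\), a contradiction.

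The only step that needs care is Step 1, specifically checking that the class \(\mathcal{P}\) is genuinely closed under the lattice operations and really contains the generated Riesz subspace. Once that description is in place, Baire's theorem and the independence of exponentials are routine.
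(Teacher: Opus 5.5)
Your proof is correct and follows the paper's argument. You show that \(k\) agrees pointwise with one of finitely many elements of \(X\otimes X\), use Baire category to obtain an open rectangle on which \(k\) has finite rank, and conclude with the linear independence of distinct exponentials. The differences are minor: you prove the piecewise-agreement property directly by showing that \(\mathcal{P}\) is a Riesz subspace, whereas the paper cites the representation \(\bigvee u_i-\bigvee v_j\) of the generated sublattice, and you get independence from analyticity and growth at infinity rather than from a Wronskian.
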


\begin{proof}
Let
\[
 V=X\otimes X\subset C(I^2).
\]
By the standard description of the vector sublattice generated by a vector
subspace, the vector sublattice \(\operatorname{lat}(V)\) generated by \(V\)
consists exactly of the elements of the form
\[
 \bigvee_{i=1}^{m}u_i-\bigvee_{j=1}^{n}v_j,
 \qquad u_1,\ldots,u_m,v_1,\ldots,v_n\in V;
 \tag{2.1}
\]
see \cite[p.~204, Exercise~8(b)]{AliprantisBurkinshaw}. Since \(H\) is the
vector sublattice of \(C(I^2)\) generated by \(V\), we have
\(H=\operatorname{lat}(V)\).

Assume that \(k\in H\). By \emph{(2.1)}, there exist
\(u_1,\ldots,u_m,v_1,\ldots,v_n\in V\) such that
\[
 k=\bigvee_{i=1}^{m}u_i-\bigvee_{j=1}^{n}v_j.
 \tag{2.2}
\]
For \(1\le i\le m\) and \(1\le j\le n\), put
\[
 w_{ij}=u_i-v_j\in V.
\]
Fix \((s,t)\in I^2\). Since the suprema in \emph{(2.2)} are finite, there are
indices \(i=i(s,t)\) and \(j=j(s,t)\) such that
\[
 \bigvee_{r=1}^{m}u_r(s,t)=u_i(s,t),
 \qquad
 \bigvee_{q=1}^{n}v_q(s,t)=v_j(s,t).
\]
Hence
\[
 k(s,t)=u_i(s,t)-v_j(s,t)=w_{ij}(s,t).
 \tag{2.3}
\]
Thus, at every point of \(I^2\), the function \(k\) agrees with one of the
finitely many functions \(w_{ij}\in V\).

For each pair \((i,j)\), define
\[
 C_{ij}=\{(s,t)\in I^2:k(s,t)=w_{ij}(s,t)\}.
\]
Each \(C_{ij}\) is closed, and by \emph{(2.3)} the finitely many sets
\(C_{ij}\) cover \(I^2\). Since \(I^2\) is a complete metric space, the Baire
category theorem implies that at least one of them has non-empty interior.
Consequently, for some pair \((i_0,j_0)\), there exist non-empty open intervals
\(U,V_0\subset I\) such that
\[
 U\times V_0\subset C_{i_0j_0}.
\]
Since \(w_{i_0j_0}\in X\otimes X\), there are \(N\in\mathbb{N}\) and functions
\(f_1,\ldots,f_N,g_1,\ldots,g_N\in X\) such that
\[
 w_{i_0j_0}(s,t)=\sum_{q=1}^{N}f_q(s)g_q(t).
\]
Therefore, on \(U\times V_0\),
\[
 e^{st}=\sum_{q=1}^{N}f_q(s)g_q(t).
 \tag{2.4}
\]

Choose pairwise distinct points \(t_1,\ldots,t_{N+1}\in V_0\). For every
\(1\le j\le N+1\), equation \emph{(2.4)} gives
\[
 e^{t_js}=\sum_{q=1}^{N}g_q(t_j)f_q(s),
 \qquad s\in U.
\]
Hence the \(N+1\) functions \(s\mapsto e^{t_js}\) belong to the vector space
spanned by \(f_1|_U,\ldots,f_N|_U\), which has dimension at most \(N\).
On the other hand, their Wronskian is
\[
 \det\!\left(\frac{d^{r-1}}{ds^{r-1}}e^{t_js}\right)_{r,j=1}^{N+1}
 =e^{(t_1+\cdots+t_{N+1})s}
   \prod_{1\le i<j\le N+1}(t_j-t_i),
\]
which is non-zero for every \(s\in U\). Thus these functions are linearly
independent on \(U\), a contradiction.
\end{proof}

For \(s\in I\), define
\[
 a(s)=\frac{e^s+1+s}{2},
 \qquad
 b(s)=\frac{1+(1-s)e^s}{2}.
\]
Both functions are positive. Define \(S_0:X\to X\) by
\[
 (S_0f)(s)
 =a(s)f(0)+b(s)f(1)
 +\frac{s^2}{2}\int_0^1 e^{sx}f(x)\,dx.
 \tag{2.5}
\]

We distinguish two notions which will be used below. A linear operator between
vector lattices is \emph{order bounded} if it maps every order-bounded set
into an order-bounded set. Norm boundedness refers here to the supremum norm
on \(C(I)\). These are different notions; for the operators used in this
paper we verify both properties separately.

\begin{lemma}
The formula \emph{(2.5)} defines a positive linear operator
\(S_0:X\to X\). The operator \(S_0\) is order bounded. It is also norm
bounded and
\[
 \|S_0\|=e+1.
 \tag{2.6}
\]
If
\[
 q(x,t)=|x-t|,
\]
then
\[
 (S_{0,x}q)(s,t)=e^{st},
 \qquad s,t\in I,
 \tag{2.7}
\]
where \(S_{0,x}\) is the norm-bounded operator on \(C(I^2)\) defined in
Lemma~2.1.
\end{lemma}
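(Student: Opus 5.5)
The plan is to verify each assertion by direct computation from \emph{(2.5)}, using the explicit forms of \(a\) and \(b\).

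\textbf{Well-definedness and positivity.} For \(f\in X\), the function \(s\mapsto\int_0^1 e^{sx}f(x)\,dx\) is continuous, by uniform continuity of \((s,x)\mapsto e^{sx}\) on \(I^2\). Hence \(S_0f\in X\), and linearity is clear. Positivity follows once \(a,b\ge0\) on \(I\).
\begin{itemize}
 \item \(a\ge0\) is obvious.
 \item For \(b\), the function \(\varphi(s)=1+(1-s)e^s\) satisfies \(\varphi'(s)=-se^s\le0\). So \(\varphi\) decreases from \(\varphi(0)=2\) to \(\varphi(1)=1>0\).
\end{itemize}
Order boundedness then follows as in the proof of Lemma~2.1: \(|f|\le u\) gives \(|S_0f|\le S_0|f|\le S_0u\).

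\textbf{Norm.} For a positive operator on \(C(I)\) one has \(|S_0f|\le\|f\|_\infty S_0\one\), so \(\|S_0\|=\|S_0\one\|_\infty\). I will compute \(S_0\one\). For \(s>0\),
\[
 \frac{s^2}{2}\int_0^1e^{sx}\,dx=\frac{s(e^s-1)}{2},
\]
and adding
\[
 a(s)+b(s)=\frac{2+s+(2-s)e^s}{2}
\]
gives
\[
 (S_0\one)(s)=1+e^s .
\]
The same formula holds at \(s=0\). Its maximum on \(I\) is \(e+1\), attained at \(s=1\), which gives \emph{(2.6)}.

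\textbf{The identity (2.7).} This is the main computation. Fix \(t\in I\) and put \(h(x)=|x-t|\). Then \(h(0)=t\) and \(h(1)=1-t\), so the boundary terms contribute
\[
 t\,a(s)+(1-t)\,b(s)=\frac{e^s+1+ts-s(1-t)e^s}{2}.
\]
For the integral with \(s>0\), I integrate by parts twice against \(E(x)=e^{sx}/s^2\), splitting at \(x=t\). On \([0,t)\) one has \(h'=-1\), on \((t,1]\) one has \(h'=1\), and \(h''=0\) away from \(t\). The jump of \(h'\) by \(2\) at \(x=t\) produces the term \(2e^{st}/s^2\). This yields
\[
 \int_0^1e^{sx}h(x)\,dx
 =\frac{(1-t)e^s-t}{s}-\frac{e^s+1}{s^2}+\frac{2e^{st}}{s^2}.
\]
Multiplying by \(s^2/2\) gives
\[
 \frac{s(1-t)e^s-st-e^s-1}{2}+e^{st}.
\]
Adding the boundary contribution, everything cancels except \(e^{st}\).

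At \(s=0\) the integral term vanishes and \(a(0)=b(0)=1\), so the value is \(t+(1-t)=1=e^{0}\).

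Since \((S_{0,x}q)(s,t)=S_0(q(\cdot,t))(s)\) by definition in Lemma~2.1, this proves \emph{(2.7)}. The only delicate point is bookkeeping the signs at the kink \(x=t\). Splitting the integral at \(t\) and treating the endpoint cases \(t=0\) and \(t=1\) by continuity in \(t\) handles this.
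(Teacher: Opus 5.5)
Your proposal is correct. For well-definedness, positivity, order boundedness and the norm it matches the paper: both compute \(S_0\one=1+e^s\) and use \(|S_0f|\le\|f\|_\infty S_0\one\). Your monotonicity argument showing \(b>0\) fills in a step the paper only asserts.

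The route to \emph{(2.7)} is different. You fix \((s,t)\), evaluate \(\int_0^1 e^{sx}|x-t|\,dx\) in closed form, and watch the boundary terms cancel. The paper instead fixes \(s\) and regards \(F_s(t)=a(s)t+b(s)(1-t)+\frac{s^2}{2}\int_0^1 e^{sx}|x-t|\,dx\) as a function of \(t\). It shows \(F_s''(t)=s^2e^{st}\) on \((0,1)\), since \(\frac12|x-t|\) is a fundamental solution of \(d^2/dt^2\). It then checks only the two endpoint values \(F_s(0)=1\) and \(F_s(1)=e^s\), and concludes because \(F_s(t)-e^{st}\) is affine and vanishes at \(0\) and \(1\).

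The paper's argument makes the design of \(S_0\) transparent. The integral term supplies the correct second derivative in \(t\), and the affine part \(a(s)t+b(s)(1-t)\) is exactly what adjusts the boundary values. The method therefore adapts to other kernels that are \(C^2\) in \(t\), with positivity checked separately. Your computation is purely algebraic and avoids differentiating a parameter integral and the uniqueness step, at the cost of hiding why \(a\) and \(b\) have their particular form.

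Incidentally, your split formula is already valid at \(t=0\) and \(t=1\): one of the two integrals is over a degenerate interval and its closed form vanishes. So no continuity argument in \(t\) is needed there.
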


\begin{proof}
Let \(f\in X\). The function
\[
 (s,x)\longmapsto e^{sx}f(x)
\]
is continuous on the compact set \(I^2\). Therefore
\[
 s\longmapsto \int_0^1 e^{sx}f(x)\,dx
\]
is continuous. Since \(a\) and \(b\) are continuous, \emph{(2.5)} defines
an element \(S_0f\in X\). Linearity is immediate. If \(f\ge0\), every term
in \emph{(2.5)} is non-negative, so \(S_0f\ge0\). Thus \(S_0\) is positive.

We first verify order boundedness. Let \(u\in X_+\) and let \(f\in X\) satisfy
\(|f|\le u\). Positivity gives
\[
 |S_0f|\le S_0|f|\le S_0u.
\]
Hence
\[
 S_0([-u,u])\subset[-S_0u,S_0u],
\]
which proves that \(S_0\) is order bounded.

We now verify norm boundedness independently. Taking \(f=\one\) in
\emph{(2.5)}, and using
\[
 \frac{s^2}{2}\int_0^1e^{sx}\,dx
 =\frac{s}{2}(e^s-1)
 \quad(s>0),
\]
with the value at \(s=0\) obtained by continuity, we obtain
\[
 (S_0\one)(s)
 =a(s)+b(s)+\frac{s^2}{2}\int_0^1e^{sx}\,dx
 =e^s+1.
 \tag{2.8}
\]
For arbitrary \(f\in X\),
\[
 |f|\le\|f\|_\infty\one
\]
and positivity yields
\[
 |S_0f|
 \le \|f\|_\infty S_0\one.
\]
Consequently
\[
 \|S_0f\|_\infty
 \le(e+1)\|f\|_\infty.
\]
Thus \(\|S_0\|\le e+1\). Equality follows from \emph{(2.8)}, since
\(\|\one\|_\infty=1\) and \(\|S_0\one\|_\infty=e+1\). This proves
\emph{(2.6)}.

It remains to prove \emph{(2.7)}. For fixed \(s\in I\), put
\[
 F_s(t)
 =a(s)t+b(s)(1-t)
 +\frac{s^2}{2}\int_0^1e^{sx}|x-t|\,dx.
\]
For \(0<t<1\), split the integral at \(x=t\):
\[
 \int_0^1e^{sx}|x-t|\,dx
 =\int_0^t e^{sx}(t-x)\,dx
  +\int_t^1e^{sx}(x-t)\,dx.
\]
Differentiating with respect to \(t\) gives
\[
 \frac{d}{dt}\int_0^1e^{sx}|x-t|\,dx
 =\int_0^t e^{sx}\,dx-\int_t^1e^{sx}\,dx,
\]
and a second differentiation gives
\[
 \frac{d^2}{dt^2}\int_0^1e^{sx}|x-t|\,dx=2e^{st}.
\]
Hence
\[
 F_s''(t)=s^2e^{st},\qquad 0<t<1.
 \tag{2.9}
\]
At \(t=0\),
\[
 F_s(0)
 =b(s)+\frac{s^2}{2}\int_0^1xe^{sx}\,dx=1,
 \tag{2.10}
\]
and at \(t=1\),
\[
 F_s(1)
 =a(s)+\frac{s^2}{2}\int_0^1(1-x)e^{sx}\,dx=e^s.
 \tag{2.11}
\]
The identities \emph{(2.10)}--\emph{(2.11)} follow by elementary integration
by parts; they also remain valid at \(s=0\) by direct substitution. The
function \(t\mapsto e^{st}\) has the same second derivative
\(s^2e^{st}\) and the same values at \(0\) and \(1\). Thus
\(F_s(t)-e^{st}\) is affine and vanishes at both endpoints. It is therefore
identically zero. This proves \emph{(2.7)}.
\end{proof}

\section{Two almost \(f\)-algebras on \(C[0,1]\)}

We now separate the part of a function which is read by the multiplication
from the part where the product is written. Define
\[
 P:X\longrightarrow X,
 \qquad
 (Pf)(u)=f(u/3),
 \qquad f\in X,\ u\in I.
 \tag{3.1}
\]
Thus \(P\) reads only the restriction of \(f\) to \([0,1/3]\). Since the
map \(u\mapsto u/3\) is continuous, \(Pf\in X\). Moreover,
\[
 |Pf|=P|f|,
\]
so \(P\) is a Riesz homomorphism, in particular a positive linear operator.
If \(|f|\le u\) with \(u\in X_+\), then
\[
 |Pf|\le Pu,
\]
so \(P\) is order bounded. On the other hand,
\[
 \|Pf\|_\infty\le\|f\|_\infty,
\]
and \(P\one=\one\); hence \(P\) is norm bounded and
\[
 \|P\|=1.
 \tag{3.2}
\]

To write the output away from \([0,1/3]\), put
\[
 \sigma(r)=\max\{3r-2,0\},
\]
and define the continuous function
\[
 \eta(r)=
 \begin{cases}
 0, & 0\le r\le \frac12,\\[2mm]
 6r-3, & \frac12\le r\le \frac23,\\[2mm]
 1, & \frac23\le r\le1.
 \end{cases}
\]
Thus \(0\le\eta\le1\), \(\eta=0\) on \([0,1/2]\), and \(\eta=1\) on
\([2/3,1]\). Define
\[
 E:X\longrightarrow X,
 \qquad
 (Eg)(r)=\eta(r)g(\sigma(r)).
 \tag{3.3}
\]
Since \(\eta\) and \(\sigma\) are continuous, \(Eg\in X\). Furthermore,
\[
 |Eg|(r)=\eta(r)|g(\sigma(r))|=(E|g|)(r),
\]
so \(E\) is a Riesz homomorphism and hence positive. If \(|g|\le u\), then
\(|Eg|\le Eu\), proving order boundedness. Also
\[
 \|Eg\|_\infty\le\|g\|_\infty.
\]
Since \(E\one=\eta\) and \(\|\eta\|_\infty=1\),
\[
 \|E\|=1.
 \tag{3.4}
\]

The separation of the two intervals gives the key identity
\[
 PE=0.
 \tag{3.5}
\]
Indeed, for \(g\in X\) and \(u\in I\),
\[
 (PEg)(u)
 =(Eg)(u/3)
 =\eta(u/3)g(\sigma(u/3)).
\]
But \(0\le u/3\le1/3<1/2\), so \(\eta(u/3)=0\). Hence
\((PEg)(u)=0\) for every \(u\in I\), which proves \emph{(3.5)}.

Finally, let
\[
 J=[2/3,1].
\]
For \(r\in J\), we have \(\eta(r)=1\) and \(\sigma(r)=3r-2\). Therefore
\[
 (Eg)(r)=g(3r-2),
 \qquad r\in J.
 \tag{3.6}
\]

On the vector lattice \(X\), define two products by
\[
 f\cdot_A g
 =E\bigl(S_0((Pf)(Pg))\bigr),
 \tag{3.7}
\]
and
\[
 f\cdot_B g
 =E\bigl((Pf)(Pg)\bigr).
 \tag{3.8}
\]

\begin{lemma}
With the products \emph{(3.7)} and \emph{(3.8)}, respectively,
\(A=(X,\cdot_A)\) and \(B=(X,\cdot_B)\) are Archimedean almost
\(f\)-algebras. In both algebras every product of three elements is zero.
\end{lemma}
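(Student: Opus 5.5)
The plan is to verify the algebra axioms and positivity, then nilpotency, and finally the almost \(f\)-property. Throughout, write \(T=E S_0\) for \(A\) and \(T=E\) for \(B\). In both cases \(T\) is a positive linear operator on \(X\), since \(S_0\) is positive by Lemma~2.3 and \(E\) is a Riesz homomorphism. Each product then has the form \(f\cdot g=T\bigl((Pf)(Pg)\bigr)\), where the inner product is the pointwise product in \(C(I)\). The Archimedean property is inherited from \(X=C(I)\) and needs no work.

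\textbf{Bilinearity, commutativity, positivity.} Bilinearity and commutativity are immediate, because \(P\) and \(T\) are linear and pointwise multiplication is bilinear and commutative. If \(f,g\ge0\), then \(Pf,Pg\ge0\), so \((Pf)(Pg)\ge0\) and hence \(T\bigl((Pf)(Pg)\bigr)\ge0\). The product is therefore positive.

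\textbf{Associativity and nilpotency.} Both follow from the identity \(PE=0\) in (3.5). Since every value of \(T\) lies in the range of \(E\), we have \(P(f\cdot g)=PE(\cdots)=0\). Consequently
\[
(f\cdot g)\cdot h=T\bigl(P(f\cdot g)\,Ph\bigr)=0,
\]
and by commutativity also \(f\cdot(g\cdot h)=0\). Thus associativity holds trivially, and every product of three elements vanishes. Bounded distributivity over scalars is just bilinearity, so \(A\) and \(B\) are commutative (associative) lattice-ordered algebras.

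\textbf{The almost \(f\)-property.} This is the only point needing thought, though it is mild. Let \(f\wedge g=0\) in \(X\). Since \(P\) is a Riesz homomorphism, \(Pf\wedge Pg=P(f\wedge g)=0\). Pointwise disjointness in \(C(I)\) then gives \((Pf)(Pg)=0\), and therefore \(f\cdot g=T(0)=0\). This holds for both \(T=ES_0\) and \(T=E\), so both multiplications are almost \(f\)-multiplications.

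No step is a genuine obstacle. The only care required is to use the Riesz homomorphism property of \(P\) in the disjointness step and the identity \(PE=0\) for both associativity and nilpotency.
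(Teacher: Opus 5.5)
Your proof is correct and follows the paper's argument essentially step for step. Bilinearity, commutativity and positivity come from the positivity of \(P\), \(S_0\) and \(E\); associativity and three-fold nilpotency come from \(PE=0\); and the almost \(f\)-property comes from \((Pf)(Pg)=0\) for disjoint \(f,g\). The only cosmetic difference is that you obtain \((Pf)(Pg)=0\) via \(Pf\wedge Pg=P(f\wedge g)=0\), whereas the paper evaluates \(f(u/3)g(u/3)=0\) pointwise directly.
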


\begin{proof}
Recall that an almost \(f\)-algebra is, in particular, a lattice-ordered
associative algebra; see \cite{BernauHuijsmans}. Thus associativity of the
products in \emph{(3.7)} and \emph{(3.8)} has to be verified explicitly.
The underlying ordered vector space is \(X=C(I)\), hence it is an
Archimedean vector lattice. We verify the algebraic and order properties of
the two multiplications separately.

First, the map
\[
 (f,g)\longmapsto (Pf)(Pg)
\]
is bilinear and symmetric, because \(P\) is linear and the multiplication in
\(C(I)\) is bilinear and commutative. Since \(E\) and \(S_0\) are linear,
\emph{(3.7)} and \emph{(3.8)} therefore define bilinear and commutative
products on \(X\).

If \(f,g\ge0\), then \(Pf,Pg\ge0\), and consequently
\((Pf)(Pg)\ge0\). Since both \(S_0\) and \(E\) are positive,
\[
 f\cdot_Ag=E(S_0((Pf)(Pg)))\ge0,
 \qquad
 f\cdot_Bg=E((Pf)(Pg))\ge0.
\]
Thus both products are positive.

We next prove associativity. The key point is that every product is annihilated
by \(P\). Indeed, every product lies in the range of \(E\), and
\emph{(3.5)} gives
\[
 \begin{aligned}
 P(f\cdot_Ag)
 &=PE\bigl(S_0((Pf)(Pg))\bigr)=0,\\
 P(f\cdot_Bg)
 &=PE\bigl((Pf)(Pg)\bigr)=0.
 \end{aligned}
 \tag{3.9}
\]
Hence, for \(f,g,h\in X\),
\[
 \begin{aligned}
 (f\cdot_Ag)\cdot_Ah
 &=E\!\left(S_0\bigl(P(f\cdot_Ag)\,Ph\bigr)\right)=0,\\
 f\cdot_A(g\cdot_Ah)
 &=E\!\left(S_0\bigl(Pf\,P(g\cdot_Ah)\bigr)\right)=0.
 \end{aligned}
\]
Therefore both possible parenthesizations of a product of three elements of
\(A\) are zero:
\[
 (f\cdot_A g)\cdot_A h=0=f\cdot_A(g\cdot_A h).
\]
Hence \(\cdot_A\) is associative. The same argument, with \(S_0\) omitted,
gives
\[
 (f\cdot_B g)\cdot_B h=0=f\cdot_B(g\cdot_B h),
\]
so \(\cdot_B\) is associative as well. Thus every product of three
elements is zero in both algebras.

Finally, let \(f\perp g\) in \(X\). Since disjointness in \(C(I)\) is
pointwise,
\[
 f(x)g(x)=0,
 \qquad x\in I.
\]
Therefore, for every \(u\in I\),
\[
 (Pf)(u)(Pg)(u)
 =f(u/3)g(u/3)=0.
\]
Thus \((Pf)(Pg)=0\), and \emph{(3.7)}--\emph{(3.8)} give
\[
 f\cdot_Ag=0,
 \qquad
 f\cdot_Bg=0.
\]
This is the defining disjointness condition for an almost \(f\)-algebra;
see \cite{BernauHuijsmans}. Hence \(A\) and \(B\) are Archimedean almost
\(f\)-algebras.
\end{proof}

\section{The counterexample}

Since the underlying vector lattice of both \(A\) and \(B\) is \(X\), their
Fremlin tensor product is the same vector lattice
\[
 H=X\ftensor X.
\]

\begin{theorem}
There is no positive bilinear multiplication \(\star\) on \(H\) such that
\[
 (f\otimes g)\star(h\otimes k)
 =(f\cdot_A h)\otimes(g\cdot_B k)
 \tag{4.1}
\]
for all \(f,h\in A\) and \(g,k\in B\).
\end{theorem}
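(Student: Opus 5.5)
Argue by contradiction. Suppose such a positive bilinear \(\star\) exists, and fix the positive tensor \(w=\one\otimes\one\in H\). Consider the positive linear operator
\[
 T:H\to H,\qquad T(z)=z\star w .
\]
On elementary tensors, (4.1) gives
\[
 T(f\otimes g)=(f\cdot_A\one)\otimes(g\cdot_B\one)=E S_0 P f\otimes E P g,
\]
using \(P\one=\one\). So on \(X\otimes X\), \(T\) agrees with the restriction of \((ES_0P)_x(EP)_y\), where the one-variable operators are those of Lemma~2.1. The plan is to show that \(T\) agrees with this operator on all of \(H\). We would then exhibit an element of \(H\) whose image, restricted to \(J\times J\), is \(e^{st}\) up to an affine change of variables. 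This contradicts Lemma~2.2.

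\textbf{Step 1 (extension agreement).} Let \(L=ES_0P\) and \(M=EP\). Both are positive and norm bounded, so by Lemma~2.1 the operator \(R=L_xM_y\) is a positive linear operator on \(C(I^2)\). The maps \(T\) and \(R|_H\) are both positive linear maps \(H\to C(I^2)\), since \(H\subset C(I^2)\), and they agree on \(X\otimes X\). The space \(C(I^2)\) is uniformly complete. The bilinear map \((f,g)\mapsto Lf\otimes Mg\) is positive, so by the uniqueness part of Fremlin's universal property (Theorem~5.3 of \cite{Fremlin1972}) its positive linear extension to \(H\) is unique. Hence \(T=R\) on \(H\). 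In particular \(R(H)\subset H\). This step is the main obstacle, and it is exactly where positivity of \(\star\) is used: without positivity there is no uniqueness.

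\textbf{Step 2 (choice of element).} Take \(z=q'\otimes\) with \(q'(x,t)=|x/3\cdot 3 - t|\) adjusted so that, after applying \(P\) in the first variable, we obtain \(|x-t|\). Concretely, take
\[
 z(x,y)=|3x-\varphi(y)|,
\]
or more simply \(z(x,y)=|\min(3x,1)-\min(3y,1)|\). This is a lattice expression in elementary tensors: \(\min(3x,1)\in X\), and \(u\otimes\one-\one\otimes v\in X\otimes X\) has absolute value in \(H\). Then \((P_xP_yz)(u,v)=|u-v|\). By Lemma~2.1 the one-variable operators commute, and they compose (\((L_1L_2)_x=(L_1)_x(L_2)_x\)). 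So
\[
 Rz=E_xE_y\,(S_0)_x\,P_xP_yz=E_xE_y\,(S_{0,x}q),
\]
with \(q(x,t)=|x-t|\), and by (2.7) this equals \(E_xE_y k\). By (3.6), for \((r,r')\in J\times J\) we get \((Rz)(r,r')=e^{(3r-2)(3r'-2)}\).

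\textbf{Step 3 (contradiction).} Since \(Rz\in H\), the proof of Lemma~2.2 applies verbatim on \(J\times J\). Write \(Rz\) in the form (2.1) and apply Baire's theorem on \(J^2\). This yields a rectangle on which \(e^{(3r-2)(3r'-2)}\) equals a finite sum \(\sum f_q(r)g_q(r')\). The affine substitution \(s=3r-2\), \(t=3r'-2\) turns this into a finite-rank representation of \(e^{st}\) on a rectangle, which the Wronskian argument rules out. Alternatively, one can note that restriction to \(J^2\) followed by the affine reparametrization maps \(H\) into \(H\), being a Riesz homomorphism preserving elementary tensors, and then invoke Lemma~2.2 directly. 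Either way \(k\in H\), a contradiction.
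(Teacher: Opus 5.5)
Your proposal is correct and follows the paper's proof essentially step by step. It uses the operator \(z\mapsto z\star(\one\otimes\one)\), identifies it with \((ES_0P)_x(EP)_y\) on \(H\) via Fremlin's uniqueness, takes the same element \(|\theta\otimes\one-\one\otimes\theta|\), and transports \(e^{st}\) from \(J\times J\). Your alternative of rerunning the Baire--Wronskian argument directly on \(J^2\) is an equally valid substitute for the paper's rescaling homomorphism; just delete the garbled first attempt at defining \(z\) at the start of Step 2.
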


\begin{proof}
Assume that such a multiplication exists. Put
\[
 p=\one\otimes\one\in H_+,
\]
and define
\[
 M:H\to H,
 \qquad
 Mz=z\star p.
\]
Since \(\star\) is positive, \(M\) is positive. By \emph{(3.7)} and
\emph{(3.8)}, for \(f,g\in X\),
\[
 M(f\otimes g)=L_Af\otimes L_Bg,
 \tag{4.2}
\]
where
\[
 L_A=ES_0P,
 \qquad
 L_B=EP.
 \tag{4.3}
\]

By the first lemma of Section~2, the operators \((L_A)_x\) and \((L_B)_y\) are well-defined, bounded and positive, and operators acting in different variables commute. Hence
\[
 U=(L_A)_x(L_B)_y:C(I^2)\to C(I^2)
\]
is a bounded positive operator. On elementary tensors,
\[
 U(f\otimes g)=L_Af\otimes L_Bg.
\]
Let \(J:H\to C(I^2)\) be the canonical inclusion. Then \(J\circ M\) and
\(U|_H\) are positive linear extensions of the same positive bilinear map
\[
 (f,g)\longmapsto L_Af\otimes L_Bg
\]
from \(X\times X\) into \(C(I^2)\). By Fremlin's positive universal
property,
\[
 J\circ M=U|_H.
 \tag{4.4}
\]

Let
\[
 \theta(x)=\min\{3x,1\},\qquad x\in I,
\]
and put
\[
 h=|\theta\otimes\one-\one\otimes\theta|\in H.
\]
Since \(P\theta(u)=u\) and \(P\one=\one\),
\[
 (P_xP_yh)(u,v)=|u-v|.
\]
By the commutation result proved in Section~2 and by \emph{(4.3)},
\[
 Uh=E_xE_yS_{0,x}(P_xP_yh).
\]
Using \emph{(2.7)},
\[
 Uh=E_xE_y(e^{uv}).
 \tag{4.5}
\]
By \emph{(3.6)}, for \(s,t\in J\),
\[
 (Uh)(s,t)=e^{(3s-2)(3t-2)}.
 \tag{4.6}
\]

By \emph{(4.4)}, \(Uh=J(Mh)\), hence \(Uh\in H\). Define
\[
 (Rq)(u,v)
 =q\left(\frac{u+2}{3},\frac{v+2}{3}\right),
 \qquad q\in C(I^2).
 \tag{4.7}
\]
The map \(R\) is a Riesz homomorphism on \(C(I^2)\). Moreover,
\[
 R(f\otimes g)
 =\left(f\circ\alpha\right)\otimes\left(g\circ\alpha\right),
 \qquad
 \alpha(u)=\frac{u+2}{3},
\]
so \(R\) maps the generating set \(X\otimes X\) into itself. Since \(R\) is a Riesz homomorphism and \(H\) is the Riesz subspace generated by \(X\otimes X\), we have \(R(H)\subset H\). Applying \(R\) to \emph{(4.6)} gives
\[
 R(Uh)(u,v)=e^{uv}.
\]
Thus \(e^{uv}\in H\), contradicting Lemma~2.1.
\end{proof}

Hence the Fremlin tensor product of two Archimedean almost \(f\)-algebras
need not admit the natural tensor product multiplication. In particular,
Theorem~3.12 of \cite{Gok} is false.

The example can also be made Banach. The operators \(P\) and \(E\) have
norm at most one. Put
\[
 c=\max\{1,\|S_0\|\}
\]
and equip \(A\) with the lattice norm
\[
 \|f\|_A=c\|f\|_\infty.
\]
Keep the usual supremum norm on \(B\). Then
\[
 \|f\cdot_A g\|_A
 \le \|f\|_A\|g\|_A,
 \qquad
 \|f\cdot_B g\|_\infty
 \le \|f\|_\infty\|g\|_\infty.
\]
Thus \(A\) and \(B\) are Banach almost \(f\)-algebras.

\begin{corollary}
There are Banach almost \(f\)-algebras \(A\) and \(B\) such that their Riesz
tensor product cannot be a normed almost \(f\)-algebra under the natural
tensor product multiplication. Moreover, the canonical copy of
\(A\ftensor B\) in the Fremlin projective tensor product \(A\ptensor B\) is
not a subalgebra for the natural projective multiplication.
\end{corollary}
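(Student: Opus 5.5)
The corollary should follow directly from Theorem~4.1 together with the Banach versions of \(A\) and \(B\) constructed just before it. We take \(A\) with the norm \(\|f\|_A=c\|f\|_\infty\) and \(B\) with the supremum norm. The remark preceding the corollary shows that these are Banach almost \(f\)-algebras. Their underlying vector lattices are both \(X\), so the Riesz (Fremlin) tensor product \(A\ftensor B\) is \(H\) as a vector lattice.

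For the first assertion, argue by contradiction. Suppose \(A\ftensor B\) carried a normed almost \(f\)-algebra structure whose multiplication is the natural one, i.e.\ satisfies \emph{(4.1)} on elementary tensors. An almost \(f\)-algebra multiplication is positive by definition, since it is a lattice-ordered algebra. Its multiplication would therefore be a positive bilinear map \(\star\) on \(H\) satisfying \emph{(4.1)}, which Theorem~4.1 rules out. The norm plays no role: the obstruction is purely order-theoretic. No normed (indeed no) almost \(f\)-algebra structure extending the elementary-tensor formula exists.

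For the second assertion, recall that \(A\ptensor B\) is a Banach lattice containing \(A\ftensor B\) as a dense Riesz subspace. The natural projective multiplication is the positive (continuous) bilinear extension of \((a\otimes b)(c\otimes d)=ac\otimes bd\). Suppose the canonical copy of \(H\) were closed under this multiplication. Restricting it to \(H\times H\) would give a bilinear map \(H\times H\to H\). This map is positive, because positivity in \(H\) is inherited from \(A\ptensor B\), \(H\) being a Riesz subspace, so \(H_+=H\cap(A\ptensor B)_+\). It also satisfies \emph{(4.1)}. This again contradicts Theorem~4.1.

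The main point to check carefully is the second assertion. I would verify that the canonical map \(A\ftensor B\to A\ptensor B\) is an injective Riesz homomorphism, which is Fremlin's construction, so that the order on \(H\) agrees with the induced order. I would also verify that the projective multiplication is positive on positive elements. Positivity should follow because it is obtained from positive bilinear maps via Fremlin's universal property and continuity, with the positive cone closed. With these standard facts cited, the corollary reduces entirely to Theorem~4.1.
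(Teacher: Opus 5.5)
Your proposal is correct and follows the paper's proof. Both arguments reduce each assertion to Theorem~4.1; for the second, both restrict the projective multiplication to the canonical copy of \(A\ftensor B\), using that this multiplication is positive and extends the elementary-tensor formula (Fremlin, Jaber). You also spell out a point the paper leaves implicit: the restriction is positive because the canonical embedding is an injective Riesz homomorphism, so \(H_+=H\cap(A\ptensor B)_+\).
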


\begin{proof}
The first assertion follows from the theorem. By \cite{Fremlin1974,Jaber},
\(A\ptensor B\) is a Banach lattice algebra whose multiplication extends the
multiplication on elementary tensors, and \(A\ftensor B\) is canonically a
norm-dense Riesz subspace. If this copy of \(A\ftensor B\) were a subalgebra,
the restricted multiplication would contradict the theorem.
\end{proof}

Consequently, Theorem~3.13 of \cite{Gok} is false. The assertion in
Theorem~3.14 of \cite{Gok} that the Fremlin projective tensor product contains
the Riesz tensor product as an almost \(f\)-subalgebra is also false. The
present example does not decide the separate assertion that the projective
completion itself is always a Banach almost \(f\)-algebra.

\section*{Acknowledgments}

This note grew out of a problem that the author worked on for several years
and later abandoned after many unsuccessful attempts. The author thanks
Page Thorn for a discussion at COSA 2026 which brought the problem back and
led to the counterexample given here. The author also thanks CIRM for hosting
COSA 2026 and for providing an environment which made these exchanges
possible.

\section*{Declarations}

\noindent\textbf{Competing Interests.}
The author declares that he has no competing interests.

\medskip
\noindent\textbf{Funding Information.}
Not applicable.

\medskip
\noindent\textbf{Author Contribution.}
M.A.B.A. conceived the study, developed the counterexample and proofs, and
wrote the manuscript.

\medskip
\noindent\textbf{Data Availability Statement.}
Not applicable. No datasets were generated or analysed during the current
study.


\begin{thebibliography}{99}

\bibitem{AzouziBenAmorJaber}
Y.~Azouzi, M.~A.~Ben Amor and J.~Jaber,
\newblock The tensor product of \(f\)-algebras,
\newblock \emph{Quaest. Math.} \textbf{41} (2018), no.~3, 359--369.
\newblock \href{https://doi.org/10.2989/16073606.2017.1382018}
{doi:10.2989/16073606.2017.1382018}.

\bibitem{AliprantisBurkinshaw}
C.~D.~Aliprantis and O.~Burkinshaw,
\newblock \emph{Positive Operators},
\newblock Springer, Dordrecht, 2006, reprint of the 1985 original.


\bibitem{BernauHuijsmans}
S.~J.~Bernau and C.~B.~Huijsmans,
\newblock Almost \(f\)-algebras and \(d\)-algebras,
\newblock \emph{Math. Proc. Cambridge Philos. Soc.}
\textbf{107} (1990), 287--308.
\newblock \href{https://doi.org/10.1017/S0305004100068560}
{doi:10.1017/S0305004100068560}.

\bibitem{BuskesWickstead}
G.~J.~H.~M.~Buskes and A.~W.~Wickstead,
\newblock Tensor products of \(f\)-algebras,
\newblock \emph{Mediterr. J. Math.} \textbf{14} (2017), Article 63.
\newblock \href{https://doi.org/10.1007/s00009-017-0841-x}
{doi:10.1007/s00009-017-0841-x}.

\bibitem{Fremlin1972}
D.~H.~Fremlin,
\newblock Tensor products of Archimedean vector lattices,
\newblock \emph{Amer. J. Math.} \textbf{94} (1972), 777--798.
\newblock \href{https://doi.org/10.2307/2373758}
{doi:10.2307/2373758}.

\bibitem{Fremlin1974}
D.~H.~Fremlin,
\newblock Tensor products of Banach lattices,
\newblock \emph{Math. Ann.} \textbf{211} (1974), 87--106.
\newblock \href{https://doi.org/10.1007/BF01344164}
{doi:10.1007/BF01344164}.

\bibitem{Gok}
O.~Gok,
\newblock On the Fremlin projective tensor product of Banach \(d\)-algebras
and almost \(f\)-algebras,
\newblock \emph{International Mathematical Forum}
\textbf{18} (2023), no.~3, 111--120.
\newblock \href{https://doi.org/10.12988/imf.2023.912391}
{doi:10.12988/imf.2023.912391}.

\bibitem{Jaber}
J.~Jaber,
\newblock The Fremlin projective tensor product of Banach lattice algebras,
\newblock \emph{J. Math. Anal. Appl.} \textbf{488} (2020), 123993.
\newblock \href{https://doi.org/10.1016/j.jmaa.2020.123993}
{doi:10.1016/j.jmaa.2020.123993}.


\end{thebibliography}
\end{document}